\documentclass[reqno, english]{amsart}
\usepackage{etex}
\usepackage{amsmath,amssymb,amsthm,bbm,mathtools,comment}
\usepackage[shortlabels]{enumitem}
\usepackage[pdftex,colorlinks,backref=page,citecolor=blue]{hyperref}
\hypersetup{pdfpagemode=UseNone,pdfstartview={XYZ null null 1.00}}
\usepackage[mathscr]{euscript}
\usepackage[usenames,dvipsnames]{color}
\usepackage{adjustbox,tikz,calc,graphics,babel,standalone}
\usetikzlibrary{shapes.misc,calc,intersections,patterns,decorations.pathreplacing}
\usetikzlibrary{arrows,shapes,positioning}
\usetikzlibrary{decorations.markings}
\usepackage[final]{microtype}
\usepackage[numbers]{natbib}
\usepackage{cmtiup}
\usepackage{amsfonts}
\usepackage{graphicx}
\usepackage{caption}
\usepackage{subcaption}
\usepackage{verbatim}
\usepackage{array}
\usepackage[frame,cmtip,arrow,matrix,line,graph,curve]{xy}
\usepackage{graphpap, color, pstricks}
\usepackage{pifont}
\usepackage[final]{microtype}
\usepackage{cmtiup}

\setlength{\oddsidemargin}{0in}
\setlength{\evensidemargin}{0in}
\setlength{\marginparwidth}{0in}
\setlength{\marginparsep}{0in}
\setlength{\marginparpush}{0in}
\setlength{\topmargin}{0in}
\setlength{\headsep}{8pt}
\setlength{\footskip}{.3in}
\setlength{\textheight}{8.7in}
\setlength{\textwidth}{6.5in}
\pagestyle{plain}
\linespread{1.2}
\setlength{\parskip}{3pt}
\allowdisplaybreaks

\theoremstyle{plain}
\newtheorem{theorem}{Theorem}[section]		
\newtheorem{lemma}[theorem]{Lemma}

\newtheorem{proposition}[theorem]{Proposition}

\newtheorem{problem}[theorem]{Problem}
\theoremstyle{remark}

\def\CC{\mathcal{C}}

\newcommand{\eps}{\ensuremath{\varepsilon}}

\let\originalleft\left
\let\originalright\right
\renewcommand{\left}{\mathopen{}\mathclose\bgroup\originalleft}
\renewcommand{\right}{\aftergroup\egroup\originalright}

\makeatletter
\def\imod#1{\allowbreak\mkern10mu({\operator@font mod}\,\,#1)}
\makeatother

\newcommand{\hide}[1]{}

\author{Ant\'onio Gir\~ao}
\address[Gir\~ao]{Mathematical Institute, University of Oxford, Andrew Wiles Building, Radcliffe Observatory Quarter, Woodstock Road, Oxford, UK.}
\email{girao@maths.ox.ac.uk}

\author{Zach Hunter}
\address[Hunter]{Department of Mathematics, ETH Z\"urich, Switzerland.}
\email{zach.hunter@ifor.math.ethz.ch}

\begin{document}

\title{Monochromatic odd cycles in edge-coloured complete graphs}
\begin{abstract}
    It is easy to see that every $q$-edge-colouring of the complete graph on $2^q+1$ vertices must contain a monochromatic odd cycle. A natural question raised by Erd\H{o}s and Graham in $1973$ asks for the smallest $L(q)$ such that every $q$-edge-colouring of $K_{2^q+1}$ must contain a monochromatic odd cycle of length at most $L(q)$. In here, we show that $L(q)=O\left(\frac{2^q}{q^{1-o(1)}}\right)$ giving the first non-trivial upper bound on $L(q)$.   
\end{abstract}
 
 \maketitle
 
\section{Introduction}
Ramsey theory is a branch of combinatorics with a rich history which has seen many developments in the last couple of years (see e.g~\cite{Ramsey1, Ramsey2}, or the recent notes \cite{yuvalnotes}).
Typically, it is concerned with finding monochromatic structures in every finite colouring of a rich enough space. In almost all instances, our quantitative understanding of these questions is very poor as the number of colours grows. Perhaps the most notorious question here is the \textit{Schur-Erd\H{o}s problem}, which asks if $R_q(K_3)$ (the smallest $N$ so that an $q$-edge-colouring $K_N$ contains a monochromatic triangle) grows exponentially in $q$. Furthermore, for every $\ell\geq 1$, we do not know the growth order of $R_q(C_{2\ell+1})$ as $q$ grows for fixed $\ell$ (the best upper bound being of the form $O(q^{q/2})$ for $\ell\ge 2$, cf. \cite{lin-chen}). 

The corresponding case for even cycles is much easier, well-known density results give that $R_q(C_{2l}) \le O_l(q^{2+2/l})$. The other instance where we now have good understanding is when one fixes $q$ and takes $l$ sufficiently large -- here a wonderful result of Jenssen and Skokan shows that $R_q(C_{2l+1}) = (l-1)2^{q-1}+1$ \cite{JS}. The problem we study in this note is perhaps closer to the first two multicolour ramsey questions about cycles, as the parameter that will be growing is the number of colours, and it is not at all clear what the actual answer should conjecturally be.

It is a simple exercise to see that every complete graph on $n$ vertices has a $q$-edge-colouring such that each colour consists of a bipartite graph, if and only if $n\leq 2^{q}$. Based on this observation, Erd\H{o}s and Graham~\cite[Question~(iii) in Section~6]{EG} asked the following natural question (cf. \url{https://www.erdosproblems.com/609}). 
\begin{problem}
 Given $q\ge 1$, let $L(q)$ be the smallest integer $\ell$ so that for $q$-edge-colouring of $K_{2^q+1}$, there exists a monochromatic cycle of odd length at most $\ell$. How does $L(q)$ grow?
\end{problem}
\noindent Day and Johnson \cite[Theorem~2]{DJ}, answering a question of Chung~\cite[Problem~75]{Chung} showed that $L(q)$ is unbounded and more precisely they proved $L(q)\geq 2^{\Omega(\sqrt{\log q})}$ (see \cite[Corollary~6]{DJ} for the quantitative bound). 

The main result of this short note is the following upper bound on $L(q)$.
\begin{theorem}\label{thm: main}
    For every $\varepsilon>0$, there is $q_0>0$ such that in every $q$-edge-colouring of $K_{2^q+1}$ there is a monochromatic odd cycle of length at most $\frac{2^{q}+1}{q^{1-\varepsilon}}$, provided $q>q_0$. 
\end{theorem}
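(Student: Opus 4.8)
The plan is to analyze a $q$-edge-colouring of $K_N$ with $N = 2^q+1$ and suppose, for contradiction, that every monochromatic odd cycle has length greater than $L := \frac{N}{q^{1-\varepsilon}}$. The key structural fact to exploit is that a graph with no short odd cycle is \emph{locally bipartite}: every ball of radius roughly $L/2$ around any vertex induces a bipartite subgraph. So for each colour $i$, the colour-$i$ graph $G_i$ has high odd-girth, and I want to leverage this to build a near-proper $2$-colouring of the vertex set that simultaneously ``works'' for many colour classes, contradicting the fact that one cannot partition $V(K_N)$ into $2$ parts avoiding all $q$ colours when $N > 2^q$.

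\textbf{Key steps.} First I would recall the elementary extremal input: a graph on $n$ vertices with no odd cycle of length $\le \ell$ has at most $O(n^{1+c/\ell})$ edges (this is the Bondy--Simonovits-type bound; for odd girth it follows from the even-cycle theorem applied appropriately, or directly). Summing over the $q$ colour classes, the total number of edges $\binom{N}{2}$ would be at most $q \cdot O(N^{1+c/L})$, and since $L = N/q^{1-\varepsilon} = \Theta(2^q/q^{1-\varepsilon})$ we get $N^{c/L} = 2^{O(cq/L)} = 2^{O(c q^{\varepsilon})}$ — this is far too big to be useful on its own, so a pure counting argument fails. Instead, the plan is to iterate a \emph{bipartite-splitting} procedure: repeatedly find a colour $i$ whose graph $G_i$ is dense on the current vertex set, use local bipartiteness to find a large subset $V'$ on which $G_i$ is bipartite (in fact on which we can $2$-colour so colour $i$ is ``almost cleared''), pass to a large monochromatic-in-structure piece, and recurse, reducing the number of ``live'' colours by one while only losing a controlled factor in the vertex count. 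If after handling all $q$ colours we still have $\ge 2$ vertices with an edge between them, its colour is not cleared — contradiction. The arithmetic to track is that clearing colour $i$ costs a factor of roughly $N^{-c/L}$ (since a bipartite subgraph of a dense graph with small bipartite pieces has the large side of size $N / (\text{number of pieces})$, and the number of pieces is controlled by edge density), so after $q$ steps we retain $\ge N \cdot N^{-cq/L} = N \cdot 2^{-O(q^{\varepsilon})} \gg 1$ vertices.

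\textbf{Main obstacle.} The delicate point is the \emph{interaction between colours}: when I $2$-colour to clear colour $i$, I must pass to one side of that bipartition (or a refinement), and I need the \emph{remaining} colour classes to still be dense enough on that smaller set for the recursion to continue — and crucially I need the odd-girth hypothesis to still give local bipartiteness on the smaller set (which it does, monotonically). The real work is choosing, at each stage, a colour that is simultaneously dense and ``splits well'' — i.e. whose bipartite blow-up structure on the current set is into not-too-many parts — and showing such a colour exists by an averaging argument over the colours (some colour has $\ge \frac{1}{q}$ of the edges, hence density $\ge \frac{n-1}{q}$, and a graph of that density with odd-girth $> L$ on $n \le N$ vertices, when $L$ is this large, cannot have its bipartite ``components'' shattered into more than $q^{1-\varepsilon+o(1)}$-ish pieces). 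Quantifying ``splits well'' and making the factor-loss bookkeeping close — so that the product over $q$ steps stays above $1$ — is where the $q^{1-\varepsilon}$ (rather than $q$) in the denominator, and the $\varepsilon$-loss and the $q > q_0$ hypothesis, all come from; I expect this balancing of the two competing requirements (density retention versus number of bipartite pieces) to be the technical heart of the argument.
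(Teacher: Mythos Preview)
Your iterative scheme has a fundamental arithmetic obstacle. ``Clearing'' colour $i$ means passing to an independent set of $G_i$; but a graph on $n$ vertices with high odd-girth still only has independence number about $n/2$ (make it bipartite by a cheap deletion as in Lemma~\ref{Lem:1}, then take the larger side). There is no mechanism that produces an independent set of size $n\cdot N^{-c/L}$ with $N^{-c/L}$ close to $1$; your claimed per-colour cost of $N^{-c/L}$ appears to conflate ``delete a few vertices to make $G_i$ bipartite'' (cheap) with ``pass to an independent set of $G_i$'' (a genuine factor $1/2$). Hence each of your $q$ rounds costs about $1/2$, and after $q$ rounds you are down to $N/2^q\approx 1$ vertex, which is exactly the trivial bound. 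The alternative reading---delete vertices to make every $G_i$ simultaneously bipartite and hope to retain more than $2^q$ vertices---also fails, since you start with only one spare vertex above $2^q$ and the total deletion is polynomially many.

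The idea you are missing, and which the paper uses, is to handle all $q$ bipartitions \emph{simultaneously} rather than sequentially. After a cheap global deletion (Lemma~\ref{Lem:1} applied to every colour), each colour $i$ yields a bipartition $(A_i,B_i)$ of the vertices lying in its \emph{large} components. If every such pair covers only a $(1-\delta)$-fraction of the remaining vertices---the slack $\delta$ coming from vertices in small components---then picking for each $i$ a random side to discard and applying Jensen (Lemma~\ref{Lem: 3}) leaves a surviving set of size at least $n' 2^{-(1-\delta)q}$ rather than $n' 2^{-q}$. With $\delta\gtrsim q^{-(1-\varepsilon)}$ this gain of $2^{\delta q}\ge 2^{q^{\varepsilon}}$ is what beats the trivial bound; the edges inside the surviving set must all lie in small components, whose union has polynomially bounded maximum degree, giving a contradiction. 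The argument is completed by a dichotomy: if instead some colour has little mass in small components, then Lemma~\ref{Lem:2} (using that the large components have bounded radius) directly extracts a short odd cycle in that colour. Your proposal has no analogue of either the simultaneous random-side step or this small/large component dichotomy.
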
 \noindent To the best of the authors' knowledge, no bound of the form $L(q) = o(2^q)$ was previously in the literature.

\section{Preliminary lemmas}

We first require two simple lemmas. 
\begin{lemma}\label{Lem:1}
    Let $G$ be a graph on $n$ vertices which does not contain an odd cycle of length at most $2k+1$, for some $k\ge \log_2 n$. Then, there is a set $S\subset V(G)$ of size at most $\frac{\log_2 n}{k}\cdot n$ that $G- S$ is bipartite and each connected component of $G-S$ has radius at most $k$. 
\end{lemma}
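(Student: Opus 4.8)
The plan is to peel $G$ apart by repeatedly running a breadth-first search from a carefully chosen vertex and deleting one thin BFS layer at a time. The starting observation is local: if $v$ is a vertex and $L_0=\{v\},L_1,L_2,\dots$ are the BFS layers around $v$, then for every $i\le k$ there is no edge inside $L_i$ — two BFS-tree paths from $v$ to the endpoints of such an edge, cut at their last common vertex, would close up into an odd cycle of length at most $2i+1\le 2k+1$. Hence every edge of the ball $B(v,k)$ joins consecutive layers, so $B(v,k)$ is bipartite (colour a vertex by the parity of its layer), and deleting any single layer $L_i$ with $i\le k$ separates the inner ball $B(v,i-1)$ from every vertex outside $B(v,i)$.

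The second ingredient is a growth estimate. For a vertex $v$ in the part of $G$ that currently survives, look at the chain $B(v,0)\subseteq B(v,1)\subseteq\dots\subseteq B(v,k)$; I claim some $1\le i\le k$ satisfies $\abs{L_i}\le\frac{\log_2 n}{k}\abs{B(v,i-1)}$. If not, then writing $x:=\frac{\log_2 n}{k}$ we get $\abs{B(v,i)}>(1+x)\abs{B(v,i-1)}$ for every such $i$, so $\abs{B(v,k)}>(1+x)^k$. But $k\ge\log_2 n$ forces $x\in(0,1]$, where $1+x\ge 2^x$ by convexity of $2^x$, so $(1+x)^k\ge 2^{xk}=2^{\log_2 n}=n$, contradicting $\abs{B(v,k)}\le n$.

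Then I iterate. Put $V_0=V(G)$; while $V_j\ne\emptyset$, pick $v_j\in V_j$, run BFS from $v_j$ inside $G[V_j]$ (a subgraph of $G$, hence still without odd cycles of length $\le 2k+1$), use the growth estimate to find a thin layer $L_{i_j}$ with $i_j\le k$, move $L_{i_j}$ into $S$, declare $P_j:=B_{V_j}(v_j,i_j-1)$ finished, and recurse on $V_{j+1}:=V_j\setminus(P_j\cup L_{i_j})$. The separation property gives that in $G[V_j]$ there is no edge between $P_j$ and $V_{j+1}$, so once the process halts each $P_j$ is exactly one connected component of $G-S$; it has radius at most $i_j-1\le k-1$ (witnessed by $v_j$) and is bipartite via the layer-parity colouring, hence $G-S$ is bipartite. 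Finally $\abs{S}=\sum_j\abs{L_{i_j}}\le\frac{\log_2 n}{k}\sum_j\abs{P_j}\le\frac{\log_2 n}{k}\cdot n$, as the $P_j$ are pairwise disjoint.

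The part that genuinely needs care — the crux — is the bookkeeping that makes the peeled pieces honest components of $G-S$: each BFS must be run in the \emph{current} induced subgraph $G[V_j]$ rather than in $G$, and one must check that no edge ever joins an already-carved piece $P_j$ to a vertex deleted or processed at a later stage. This is precisely where the ``all edges join consecutive layers'' fact is used, together with the observations that $P_j$ (a union of the lowest BFS layers) is internally connected and that every edge leaving $P_j$ lands in the deleted layer $L_{i_j}$. Everything else is the elementary bound $1+x\ge 2^x$ on $(0,1]$ and a one-line summation.
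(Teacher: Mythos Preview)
Your argument is correct and follows essentially the same approach as the paper: iteratively run BFS from a surviving vertex, use the growth bound $(1+\eps)^k\ge n$ (you phrase it as $1+x\ge 2^x$ on $(0,1]$, the paper via Bernoulli, but these are the same inequality) to locate a thin layer, put that layer into $S$, and peel off the inner ball. Your write-up is in fact more careful than the paper's in verifying that the peeled balls $P_j$ are precisely the connected components of $G-S$ (hence really have radius $\le k$), which the paper leaves as ``by construction $G-S$ has the desired properties.''
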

\begin{proof}
    Write $\eps := \frac{\log_2 n}{k}$. We quickly note that $(1+\eps)^k \ge n$. Recall Bernoulli's inequality, which says $(1+x)^r\ge 1+xr$ for all real $x>-1,r\ge 1$. We apply said inequality with $x:= \eps,r:= 1/\eps$ (since we assumed $k$ is large, we get $1/\eps\ge 1$), giving $(1+\eps)^k = ((1+\eps)^{1/\eps})^{\log_2 n} \ge 2^{\log_2 n} =n$ as desired. 

    Now pick a vertex $x$, and let $N^{(i)}(x)$ be the set of vertices at distance $i$ from $x$, for some $i\in \mathbb{N}$ and $N^{\leq i}(x)$ the set of vertices at distance at most $i$ from $x$. If $|N^{\leq i}(x)|> (1+\eps)|N^{\leq i-1}(x)| $, for all $i\in [k]$, we would get a contradiction, since $|N^{\leq k}(x)|> (1+\eps)^k\ge n$. So there is some $j\leq k $ with $|N^{(j)}(x)|\leq n^{1/k} |N^{\leq j-1}(x)|$. Observe that by assumption $N^{\leq j-1}(x)$ is bipartite since otherwise it would contain a cycle of length at most $2j-1$. We add $N^{(j)}(x)$ to $S$ and delete the vertices in $N^{\le j}(x)$ from $G$.
    
    We then iterate this procedure. Consider some $x$ in the remaining graph. Again, we can find $j\le k$ so that $|N^{\leq j}(x)|\le (1+\eps)|N^{\leq j-1}(x)|$; and as before we add the $N^{(j)}(x)$ to $S$ and delete $N^{\leq j}(x)$ from $G$. We continue until no vertices remain. Throughout the entire process, we have that $|S|$ is at most an $\eps$-fraction of the number of vertices we have deleted thus far. So when we stop $|S|\le \eps n$, and by construction $G-S$ has the desired properties.  
\end{proof}

\begin{lemma}\label{Lem:2}
    Let $F$ be a non-bipartite graph. Suppose $H\subset F$ is some subgraph where every component has radius at most $r$. Finally, suppose $H'$ is some subgraph of $H$ with at most $m$ connected components. Then $F$ contains an odd cycle of length at most $|V(F)\setminus V(H')|+(4r+1)m$.
 \end{lemma}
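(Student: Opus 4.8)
The plan is to show that a \emph{shortest} odd cycle $C$ of $F$ --- which exists since $F$ is non-bipartite --- already has length at most $|V(F)\setminus V(H')|+(4r+1)m$, so that one may simply output $C$. Write $U:=V(F)\setminus V(H')$. Then $|C|=|V(C)\cap U|+|V(C)\cap V(H')|$ and $|V(C)\cap U|\le|U|$, so it is enough to bound $|V(C)\cap V(H')|$. Each of the at most $m$ components of $H'$ is connected and contained in $H$, hence lies inside a single component of $H$; consequently $V(H')$ is covered by the vertex sets of at most $m$ components of $H$. So the whole task reduces to proving that $|V(C)\cap V(D)|\le 4r+1$ for every component $D$ of $H$. (We may clearly assume $r$ is a non-negative integer.)

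The key fact is that a shortest odd cycle is isometrically embedded: $d_F(x,y)=d_C(x,y)$ for all $x,y\in V(C)$. To prove this, suppose some $F$-path from $x$ to $y$ had length $p'$ strictly smaller than the length $p$ of the shorter of the two $x$--$y$ arcs of $C$. The two arcs have lengths $p$ and $|C|-p$, which have opposite parity (their sum $|C|$ is odd), so exactly one of the two closed walks obtained by concatenating the path with one of these arcs has odd length, and in either case its length is less than $|C|$ (namely $p'+p<2p\le|C|-1$, or $p'+(|C|-p)<|C|$ since $p'<p$). As every odd closed walk contains an odd cycle, this contradicts the minimality of $|C|$. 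Now fix a component $D$ of $H$: since $D$ has radius at most $r$, any two of its vertices are at distance $\le 2r$ in $D$, hence $\le 2r$ in $F$, hence --- by the isometry --- at $C$-distance $\le 2r$. Thus $S:=V(C)\cap V(D)$ is a set of vertices of the cycle $C$ that are pairwise at $C$-distance at most $2r$.

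It remains to observe that on \emph{any} cycle, a set of vertices pairwise at distance at most $2r$ has size at most $4r+1$. Indeed, fix $s_0\in S$, list the elements of $S$ in cyclic order starting from $s_0$, and let $0=P_0<P_1<\dots<P_{|S|-1}$ be their distances from $s_0$ measured along $C$ in a fixed direction; these are distinct integers, and each satisfies $\min(P_j,|C|-P_j)\le 2r$, so all of them lie in $\{0,1,\dots,2r\}\cup\{|C|-2r,\dots,|C|-1\}$, a set of at most $(2r+1)+2r=4r+1$ integers. Hence $|S|\le 4r+1$. Summing over the at most $m$ relevant components of $H$ then gives $|V(C)\cap V(H')|\le(4r+1)m$, and therefore $|C|\le|V(F)\setminus V(H')|+(4r+1)m$, which completes the argument. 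The only step that needs genuine care is the isometry claim --- in particular, correctly identifying which of the two concatenated walks has odd length --- while the remaining steps are routine counting.
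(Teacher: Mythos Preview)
Your proof is correct and follows essentially the same approach as the paper: take a shortest odd cycle $C$, and show that each component of $H$ can contribute at most $4r+1$ vertices to $C$ by exploiting that a shortcut through such a component would yield a shorter odd cycle. The only difference is packaging: the paper argues directly by contradiction (if some component meets $C$ in $\ge 4r+2$ vertices, two of them are at $C$-distance $\ge 2r+1$, and a path of length $\le 2r$ through the component gives a shorter odd circuit), whereas you first isolate the isometry property $d_F=d_C$ of a shortest odd cycle and then do a clean position-counting argument on the cycle---same idea, slightly more explicit.
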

 \begin{proof}
     Let $\mathcal{C}$ be the shortest odd cycle in $F$. If $\mathcal{C}$ contains at most $4r+1$ vertices from each connected component in $H'$ we are done. Suppose not, then there must be a connected component $X$ in $H$ such that $|V(\mathcal{C})\cap V(X)|\geq 4r+2$. 
     
     This implies there are two vertices $x,y\in \mathcal{C}\cap V(X)$ whose distance in $\mathcal{C}$ is at least $2r+1$. Next let $P$ be a path in $H[X]$ between $x,y$ of length at most $2r$ (which exists because the component has radius at most $r$). We claim $\mathcal{C}\cup P$ must contain an odd cycle of length strictly smaller than $|\mathcal{C}|$, giving a contradiction.

     One finds the shorter odd cycle as follows. Since $\CC$ is an odd cycle, there is a path $P'\subset \CC$ from $x$ to $y$, so that $|P'|$ and $|P|$ have the same parity. We delete the edges from $P'$ and add the edges from $P$. This gives us an odd circuit $\mathcal{C}'$ with length strictly smaller than that of $\mathcal{C}$ (since $|P'|\ge 2r+1>|P|$). Any odd circuit of length $\ell'$ certainly contains an odd cycle of length $\ell''\le \ell'$, establishing the claim.
 \end{proof}
We turn now to the main ingredient of the proof. 
 \begin{lemma}\label{Lem: 3}
    Let $n\geq 2^q/2$ be an integer and $\varepsilon>1/q$. Let $(A_1,B_1),\ldots (A_q,B_q)$ be pairs of disjoint subsets of $[n]$ and suppose that for every $i$, $|A_i|+|B_i|\leq (1-\varepsilon)n$. 
    Then, there is a set $L\subset [n]$ of size at least $2^{\varepsilon q}/2$ such that no edge $e\in L^{(2)}$ is of the form $e=ab$, for some choice of $i\in [q]$ and $a\in A_i,b\in B_i$. 
\end{lemma}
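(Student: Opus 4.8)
The plan is to argue by a one-shot random selection. The point is to view each vertex $v\in[n]$ through the set of coordinates $i$ for which $v\in A_i\cup B_i$, and then, for each $i\in[q]$, independently "forbid" either all of $A_i$ or all of $B_i$, each with probability $1/2$. Writing $X_i$ for the forbidden set and $L:=[n]\setminus\bigcup_{i\in[q]}X_i$, the independence property is automatic: for every $i$ one of $A_i,B_i$ is disjoint from $L$, so no $e\in L^{(2)}$ can be of the form $ab$ with $a\in A_i$, $b\in B_i$. Thus the only real task is to show that $L$ is large in expectation.

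For $v\in[n]$ put $d(v):=\abs{\{i\in[q]:v\in A_i\cup B_i\}}$. A vertex $v$ survives into $L$ exactly when $v\notin X_i$ for every $i$; for each coordinate $i$ with $v\in A_i\cup B_i$ this happens with probability exactly $1/2$, and since the choices are independent across $i$ (and $A_i\cap B_i=\emptyset$), we get $\Pr[v\in L]=2^{-d(v)}$. Hence $\mathbb{E}\abs{L}=\sum_{v\in[n]}2^{-d(v)}$. Now $\sum_{v\in[n]}d(v)=\sum_{i\in[q]}\abs{A_i\cup B_i}\le\sum_{i\in[q]}\left(\abs{A_i}+\abs{B_i}\right)\le q(1-\varepsilon)n$, so the average value of $d$ over $[n]$ is at most $(1-\varepsilon)q$. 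Since $x\mapsto 2^{-x}$ is convex and decreasing, Jensen's inequality yields
\[
\mathbb{E}\abs{L}=\sum_{v\in[n]}2^{-d(v)}\ \ge\ n\cdot 2^{-(1-\varepsilon)q}\ \ge\ \frac{2^{q}}{2}\cdot 2^{-(1-\varepsilon)q}=\frac{2^{\varepsilon q}}{2}.
\]
Therefore some outcome of the random choices produces a set $L$ with $\abs{L}\ge 2^{\varepsilon q}/2$, and by the discussion above this $L$ has the required property.

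There is essentially no serious obstacle here; the argument is short once one hits on the right idea. If anything, the "main step" is conceptual: rather than trying to build $L$ greedily vertex-by-vertex, one should randomize over which of the two sides of each pair to discard, after which the convexity estimate is exactly what converts the slack $\varepsilon q$ in the average degree into the gain of a factor $2^{\varepsilon q}$ over the trivial bound coming from deleting all $q$ of the bipartite pieces. (The hypothesis $\varepsilon>1/q$ is only needed to make the conclusion $\abs{L}\ge 2^{\varepsilon q}/2$ nonvacuous.)
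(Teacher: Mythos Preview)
Your argument is correct and is essentially identical to the paper's own proof: both randomly select one side of each pair $(A_i,B_i)$ to discard, compute $\Pr[v\in L]=2^{-d(v)}$, bound the average degree by $(1-\varepsilon)q$ via double counting, and apply convexity of $x\mapsto 2^{-x}$ to get $\mathbb{E}\abs{L}\ge n\cdot 2^{-(1-\varepsilon)q}\ge 2^{\varepsilon q}/2$. Your write-up is in fact a bit more explicit (you name Jensen's inequality and spell out the independence), and your remark that the hypothesis $\varepsilon>1/q$ is not actually used in the argument is accurate.
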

\begin{proof}
 For a vertex $x\in [n]$, denote $d(x)\coloneqq \{i\in [q]: i\in A_i\cup B_i\}$ and let $d\coloneqq \sum_{x\in [n]} d(x)/n$.  
By assumption on the sizes we have $d\leq (1-\varepsilon)q$. Now, for every $i$ take uniformly at random either the set $A_i$ or $B_i$ independently. Let $U$ be the union of these sets. 
The expected size of $[n]\setminus U$ is at least  $$\sum_{x\in [n]} 2^{-d(x)}\geq n2^{-d}\geq n\cdot 2^{-(1-\varepsilon) q}\geq 2^{\varepsilon q}/2.$$ Hence there is a choice of $U$ such that $L\coloneqq [n]\setminus U$ has size at least $2^{\varepsilon q}/2$. Note for each $i\in [q]$, we have that either $L\cap A_i$ or $L\cap B_i$ is empty, meaning each edge $e\in L^{(2)}$ behaves as desired.
\end{proof}

\section{Proof of main theorem}

In what follows we will consider some edge-colouring of $K_n$ using colours in $[q]$. We will use $G^{(i)}$ to denote the subgraph with all edges of colour $i$. Often, given a graph $G$ and a set of vertices $S$, we write $G-S$ to denote the graph induced by deleting the vertices belonging to $S$.
 
Let $\varepsilon>0$ be fixed. Let $C(\varepsilon)$ be a large enough positive constant. We shall prove that $L(q)\le C(\varepsilon) \frac{2^q}{q^{1-\varepsilon}}$ by induction on $q$ (which clearly implies Theorem~\ref{thm: main}). For small $q$, there is nothing to show as $C(\varepsilon)\frac{2^{q}}{q^{1-\varepsilon}}\geq 2^{q}+1$.
Let $n\coloneqq 2^{q}+1$ and suppose $K_n$ is $q$-edge-coloured. By induction, we may assume every colour class contains an odd cycle otherwise we can pass to a subgraph of size $\geq 2^{q-1}+1$ which is $(q-1)$-edge-coloured. By induction, we get a monochromatic odd cycle of size at most $C(\varepsilon)\frac{2^{q-1}}{(q-1)^{1-\varepsilon}}\leq C(\varepsilon)\frac{2^{q}}{q^{1-\varepsilon}}$, as we wanted to show.

Firstly, we apply Lemma~\ref{Lem:1} to each colour class with $k\coloneqq 8q^3$ (by assumption, for $q$ large, $G^{(i)}$ should not contain an odd cycle of length at most $2k+1$). For each colour $i$, let $S_i$ be the set output by the lemma and take $S\coloneqq \cup_{i=1}^{q} S_i$. By assumption $|S|\leq q\frac{2(2^{q}+1)\log(2^{q}+1)}{8q^3}\leq \frac{n}{2q}$. We shall imagine deleting the vertices in $S$, and find a short odd cycle by consider the bipartite edge-coloring on the remaining vertices.  

For each colour $i\in [q]$, we do the following. Let $X_{1},\ldots X_m$ be the components of the bipartite graph $G^{(i)}-S$.
Then, define $\mathcal{B}_i=\{X_1,\ldots , X_t\}$ to be the set of these components of size at most $4q^{10}$. 

Suppose $|V(\mathcal{B}_i)|\leq n/q^{1-\varepsilon}$. We now apply Lemma~\ref{Lem:2} to $G^{(i)}$. Indeed, $H' \coloneqq G^{(i)}- (S\cup V(\mathcal{B}_i))$ consists of at most $m\leq n/4q^{10}$ components, and $H\coloneqq G^{(i)}-S_i\supset H'$ only has components with radius at most $8q^3$. So applying Lemma~\ref{Lem:2} and recalling $G^{(i)}$ was assumed to have an odd cycle, it must have one of size at most $|S|+|\mathcal{B}|+(32q^3+2) \frac{n}{q^{10}}\leq \frac{2n}{q^{1-\varepsilon}}$.

We may therefore assume that for all $i\in [q]$, $|V(\mathcal{B}_i)|> n/q^{1-\varepsilon}$. Here, we shall now apply Lemma~\ref{Lem: 3}. First, we delete all vertices in $S$, leaving us with a vertex set $V'$ of $n'\ge 2^q/2$ vertices left, and write $H^{(i)}\coloneqq G^{(i)}-S$ (similar to before). For each $i\in [q]$, the graph $ H^{(i)}-V(\mathcal{B}_i)$ is bipartite, so we can fix some partition of these vertices into independent sets $A_i,B_i$. 
We have from assumptions that $|A_i|+|B_i|\leq (1-\delta)n'$, where $\delta> \frac{1}{q^{1-\varepsilon}}$. Therefore (by Lemma~\ref{Lem: 3}), there is a set $L\subset V'$ of size at least $2^{\delta q}/2$, where no edge belongs to any of the graphs $H^{(i)}-V(\mathcal{B}_i)$ (for $i\in [q]$). Hence, all the edges in $L$ must be covered using the connected components from $\mathcal{B}_1,\dots,\mathcal{B}_q$. But this is not possible since the maximum degree of the union of these small components is at most $q\cdot 4q^{10}$ which is smaller than $ 2^{\delta q}/2 $ for large $q$, giving a contradiction.

\section{Concluding remarks}
The current bounds on $L(q)$ are very far apart. As mentioned in the introduction, Day and Johnson \cite{DJ} showed that $L(q)\ge \exp(\Omega(\sqrt{\log q}))$. They were the first to show that $L(q)$ is super-constant, and this remains the state of the art. While we have focused on $K_{2^q+1}$, one can obviously generalize to consider when there are more vertices.

Let $L(q,N)$ be such that any $q$-edge-coloring of $K_N$ still has a monochromatic odd cycle of length $\ell\le L(q,N)$. We note that a simple product colouring argument \cite[Lemma~7.1]{DJ} combined with with the original argument of Day and Johnson gives a lower bound of $L(q,2^{q+1}) \ge \Omega(\sqrt{\log q})$. Meanwhile, in this setting one can get much better upper bounds.

Indeed, we get the following. 
\begin{proposition}
    For $q\ge 1,\delta \in (0,1)$, we have that $L(q,(1+\delta)2^q)\le O(q^2 \delta^{-1})$. 
\end{proposition}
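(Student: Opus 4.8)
The plan is to mimic the proof of Theorem~\ref{thm: main}, but since we now start with substantially more than $2^q$ vertices the argument collapses to its probabilistic core: neither Lemma~\ref{Lem:2} nor the small/large component dichotomy is needed. Set $n:=\lceil(1+\delta)2^q\rceil$, so $n>2^q$, and put $k:=\lceil 8q^2\delta^{-1}\rceil$. Suppose for contradiction that some $q$-edge-colouring of $K_n$ has no monochromatic odd cycle of length at most $2k+1$. Since $k\ge \log_2 n$, we may apply Lemma~\ref{Lem:1} to each colour class $G^{(i)}$, obtaining $S_i\subseteq V(K_n)$ with $|S_i|\le \frac{\log_2 n}{k}\,n$ and $G^{(i)}-S_i$ bipartite. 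Let $S:=\bigcup_{i\in[q]}S_i$. Using $\log_2 n\le q+1\le 2q$ and $n\le 2\cdot 2^q$, we get $|S|\le q\cdot\frac{\log_2 n}{k}\,n\le \frac{4q^2}{k}\,2^q\le \tfrac{\delta}{2}2^q$, so the set $V':=V(K_n)\setminus S$ has size $n':=|V'|\ge (1+\delta)2^q-\tfrac{\delta}{2}2^q=(1+\tfrac{\delta}{2})2^q>2^q$.

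Next I would run the probabilistic step from the proof of Lemma~\ref{Lem: 3}. For each $i\in[q]$ the induced graph $G^{(i)}[V']$ is bipartite, being an induced subgraph of the bipartite graph $G^{(i)}-S_i$; fix a partition $V'=A_i\sqcup B_i$ into two colour-$i$-independent sets (assigning colour-$i$-isolated vertices of $V'$ to $A_i$, say). For each $i$ choose one of $A_i,B_i$ uniformly and independently at random, let $U$ be the union of the chosen sets and $L:=V'\setminus U$. Each $v\in V'$ lies in exactly one of $A_i,B_i$ for every $i$, so $\Pr[v\in L]=2^{-q}$ and hence $\mathbb{E}|L|=n'2^{-q}>1$. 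Fix an outcome of the random choice for which $|L|\ge 2$.

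Finally, $L$ cannot contain an edge of $K_n$: if $a,b\in L$ are distinct, the edge $ab$ has some colour $i\in[q]$, and since all edges of the bipartite graph $G^{(i)}[V']$ run between $A_i$ and $B_i$ we may assume $a\in A_i$ and $b\in B_i$; but $L$ is disjoint from whichever of $A_i,B_i$ was chosen, so it omits $a$ or it omits $b$ — a contradiction. Hence the assumed colouring does not exist, and every $q$-edge-colouring of $K_n$ has a monochromatic odd cycle of length at most $2k+1=O(q^2\delta^{-1})$, as claimed. The only point that requires any care is the bookkeeping that keeps $|S|$ below $\tfrac{\delta}{2}2^q$ (equivalently, that leaves more than $2^q$ vertices after deleting $S$), since that is precisely what forces the expected independent set to have size exceeding $1$; everything else is immediate from Lemma~\ref{Lem:1} and the random-colouring trick. (For the few small values of $q$ with $(1+\delta)2^q$ very close to $2^q$ the condition $k\ge\log_2 n$ is still met, and any residual slack is absorbed into the constant in the $O(\cdot)$.)
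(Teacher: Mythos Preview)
Your proof is correct and follows essentially the same approach as the paper's: delete $O(q\log_2 n/k\cdot n)$ vertices per colour via Lemma~\ref{Lem:1} so that every colour becomes bipartite on the remaining $>2^q$ vertices, then find two survivors lying on the same side of every bipartition. The only cosmetic difference is that the paper phrases the last step as straight pigeonhole (more than $2^q$ vertices, only $2^q$ possible side-profiles, so two coincide) rather than via the random-choice averaging of Lemma~\ref{Lem: 3}; the two arguments are equivalent.
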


\begin{proof}
    Write $N:= (1+\delta)2^q< 2^{q+1}$. Fix some $q$-edge-colouring of $K_N$, and for $i\in [q]$ let $G^{(i)}$ denote the graph in colour $i$.
    
    Simply take $k:= \lceil 2q(q+1)\delta^{-1}\rceil$, and suppose for sake of contradiction that each $G^{(i)}$ has no odd cycles with length at most $2k+1$. As $\log_2 N< q+1\le k$, we get that we can apply Lemma~\ref{Lem:1}, deleting a set $S$ of at most $q\cdot \frac{\log_2N}{ k}<\frac{\delta}{2}N$ vertices making $G^{(i)}-S$ bipartite for every $i\in [q]$. But $K_N-S$ has more than $2^q$ vertices, so by pigeonhole and bipartiteness there must be distinct $x,y\not\in S$ that belong to the same independent set in each $G^{(i)}-S$, meaning the edge $xy$ was not coloured (contradicting that $E(G^{(1)})\cup \dots \cup E(G^{(q)})=E(K_N)$).
\end{proof}

Another result of Day and Johnson shows that for each $k$, there exists $\eps_k>0$ so that $L(q,(2+\eps_k)^q)>2k+1$ \cite[Theorem~4]{DJ}. If one inspects the proof of Lemma~\ref{Lem:1}, then when $k<\log_2 n$ one still gets that $G-S$ is bipartite for some set $S$ of size at most $(1-n^{-1/k})n$. A simple corollary then is that any $n$-vertex graph $G$ with no odd cycle of length at most $2k+1$ has an independent set of size at least $(1/2)(n-n^{-1/k})$, giving the bound $L(q,(2+\eps)^q)\le C_{\eps}q +O_\eps(1)$ by a simple induction (for the constant $C_{\eps}$ satisfying $(2+\eps)^{-2/C_{\eps}} =1-\frac{2}{2+\eps}$). 

\hide{Finally, it would be very interesting to better understand what is happening for $L(q,C^q)$ for large values of $C$. As mentioned before, it is a famous open question whether the multicolour ramsey number of the triangle is exponential, which is equivalent to asking if $L(q,C^q)=3$ when $C$ is some large absolute constant. The astute reader may observe that the upper bound mentioned in the last paragraph does not yield improved bounds (and starts giving worse ones) when one tries to bound $L(q,100^q)$ and $L(q,1000^q)$. One can get around this by taking small $\delta,\eta>0$, considering huge $C$, and taking $k=\eta q$ roughly $\delta q$ times, to obtain some recurrence of the form $L(q,C^q)\le \max\{2\eta q+1, L((1-\eta)q, C^q (2C^{1/\eta})^{-\delta q}\} $. So taking $\delta= \eta/2$ (say), if we define $f(C) := \limsup_{q\to\infty} \frac{L(q,C^q)}{q}$, one gets the recurrence $f(C)\le \max\{2\eta,f(\sqrt{C}/2)\}$. Surely the correct way to carry out these bounds is via some differential equation methods, but we will not do this here. While the focus of our work was on $L(q,2^q+1)$, we hope to see further progress in these other regimes. This includes the lower bounds -- Day and Johnson speculate (see the bottom of Section~2 in \cite{DJ}) that their bounds should not be sharp, but several years later they still remain the state of the art.}

\bibliographystyle{alpha}
\bibliography{bibliography}

\end{document}